\newtheorem{theorem}{Theorem}[section]
\newtheorem{proposition}[theorem]{Proposition}
\theoremstyle{definition}
\newtheorem{con}[theorem]{Conjecture}
\theoremstyle{remark}
\newtheorem{remark}[theorem]{Remark}
\numberwithin{equation}{section}
\newcommand{\Br}{\mathrm{Br}}
\newcommand{\F}{\mathcal{F}}
\newcommand{\Oc}{\mathcal{O}}
\newcommand{\Hom}{\mathrm{Hom}}
\newcommand{\Aut}{\mathrm{Aut}}
\begin{document}

\title[Reduction theorems for a conjecture]{Reduction theorems for a conjecture on bases in source algebras of blocks of finite groups}
%\title{}

%    Only \author and \address are required; other information is
%    optional.  Remove any unused author tags.

%    author one information
% \author[short version for running head]{name for top of paper}
\author{Tiberiu Cocone\c t}
\address{Babes-Bolyai University, Faculty of Economics and Business Administration, Str. Teodor Mihali, nr. 58-60, Cluj-Napoca, RO-400591, Romania}
\email{tiberiu.coconet@math.ubbcluj.ro}

  % author two information
\author{Constantin-Cosmin Todea}
\address{Technical University of Cluj-Napoca, Department of Mathematics, Str. G. Baritiu 25, Cluj-Napoca, RO-400027, Romania}
\address{Babes-Bolyai University, Department of Mathematics, Faculty of Mathematics and Computer
Science, Str. Mihail Kogalniceanu 1, 400084, Cluj-Napoca, Romania}
\email{constantin.todea@math.utcluj.ro}

%    \subjclass is required.
\subjclass[2020]{20C20}%\\ 
%"This work was supported by a grant of the Ministry of Research, Innovation and Digitization, CNCS -UEFISCDI, project number PN-IV-P1-PCE-2023-0060, within PNCDI IV”

%\date{}

%\dedicatory{"This work was supported by a grant of the Ministry of Research, Innovation and Digitization, CNCS -UEFISCDI, project number PN-IV-P1-PCE-2023-0060, within PNCDI IV”}

%    Abstract is required.
\begin{abstract}
The aim of this short research note is to present  some results about a conjecture of Barker and Gelvin \cite[Conjecture 1.5 ]{BaGe} claiming that any source algebra of a $p$-block ($p$ is a prime) of a finite group has the unit group containing a basis stabilized by the left and right actions of the defect group. We obtain some reduction theorems for the existence of stable unital basis in source algebras of $p$-block algebras. Along the way we investigate this problem for the $p$-blocks of some finite simple groups.
\end{abstract}

\maketitle
\section{Introduction} \label{sec1}
%% Labels are used to cross-reference an item using \ref command.
In the context of $p$-modular representation theory ($p$ is a prime), source algebras of $p$-blocks, introduced by Puig in 1981, are smaller than the $p$-block algebras and are Morita equivalent to the corresponding $p$-block algebras. From now we shall call a $p$-block shortly a block, unless we need to emphasize the prime $p.$ The source algebra of a block preserves not only the equivalence class  of the module category but also invariants such as the defect groups, the category of subpairs, the  vertices and the sources of modules.The source algebra may be seen as the group representation version of the basic algebra associated with a finite dimensional algebra.

Let $\mathcal{\Oc}$ be a complete a complete principal ideal domain with identity element and with an algebraically closed residue field $k$ of prime characteristic $p$.  Any $\Oc$-algebra $\mathbf{A}$  considered in this paper has an  identity element $1_{\mathbf{A}}$ and we denote by $\mathbf{A}^{\times},$  the group consisting of invertible elements, that is the group of units. A basis for an algebra over $\Oc$ is an $\Oc$-basis as an $\Oc$-module. Such a basis is said to be \textit{unital} if every element is unital. The hypothesis on $\Oc$ implies that either $\Oc=k$ or $\Oc$ is a complete discrete valuation ring. We first present  a conjecture proposed by Barker and Gelvin for the case $\Oc=k.$ We assume familiarity with  the theory of group-acted algebras,  Brauer maps, fusions systems, blocks, source algebras, as in \cite{Libo1}, \cite{Libo2} and \cite{The}.
Let $G$ be a finite group such that $p$ divides the order of $G$  and  let $b$ be a block idempotent of $\Oc G$ with defect group $D$ (a $p$-subgroup of $G$) and $i\in (\Oc Gb)^D$ be a primitive idempotent such that $\Br_D^{\Oc G}(i)\neq 0,$ where $\Br_D^{\Oc G}$ is the Brauer map \cite[Definition 5.4.2]{Libo1} with respect to $D.$ The notation $A:=i\Oc Gi$ stands for the   interior $D$-algebra called the \textit{source algebra} of $b$ and $i$ is called a source idempotent of $b.$ The source algebra $A$ has a $D\times D$-stable  basis on which $D\times 1$ and $1\times D$ act freely. Following \cite[Section 6]{BaGe} this means that $A$ is a \textit{bifree bipermutation}  $D$-algebra. If  $A$  admits  a  $D\times D$-stable unital basis, we say that $A$ is a \textit{uniform source} $D$\textit{-algebra} of $\Oc Gb,$ see \cite[Theorem 1.2]{BaGe}.
 Let $\bar{b}\in Z(kG)$ denote the corresponding block of $b\in Z(\Oc G)$ via the canonical $\Oc$-algebra surjective homomorphism $\Oc G\rightarrow kG,$ see \cite[Proposition 37.4]{The}.
  It is well-known that $b$ and $\bar{b}$ share the same defect group $D$ and the same fusion system. Since  $A=i\Oc Gi$ is a source algebra of $b$ then \cite[Lemma 38.1]{The} yields a $D$-interior algebra surjective homomorphism $A\rightarrow \bar{A},$ where $\bar{A}:=i\Oc G i/J(\Oc) i\Oc G i$ is a source algebra of $\bar{b}.$ In \cite{BaGe} the authors propose the following conjecture for blocks over $\Oc,$ we  only state it here for  the case $\Oc=k$ and we exclude the blocks with trivial defect groups, since that situation is coverd by \cite[Proposition 6.6.3]{Libo2}.
\begin{con}\label{conj-BaGe}(\cite[Conjecture 1.5]{BaGe}, case $\Oc=k$) For any block idempotent $\bar{b}$ of $kG$ with non-trivial defect group, the source algebras  of $kG \bar{b}$ are uniform.
\end{con}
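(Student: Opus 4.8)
The plan is to follow the standard two-stage scheme for block-theoretic conjectures: first prove enough functoriality for the ``uniform'' condition to reduce Conjecture~\ref{conj-BaGe} to (quasi-)simple groups, and then attack those by exploiting whatever explicit description of the source algebra is available. The engine of the reduction is a handful of closure properties that I would establish first. (i) Invariance under Puig equivalence: if $A$ and $A'$ are source algebras with $A \cong \mathrm{Mat}_n(k)\otimes_k A'$ as interior $D$-algebras, then $A$ is uniform iff $A'$ is, because over the algebraically closed field $k$ the matrix algebra $\mathrm{Mat}_n(k)$ possesses a $k$-basis of invertible matrices, and the trivial $D\times D$-action on that tensor factor is automatically stable. (ii) Compatibility with tensor products: if $A_j$ is a uniform source $D_j$-algebra for $j=1,2$, then $A_1\otimes_k A_2$ is a uniform source $(D_1\times D_2)$-algebra, the product of the two stable unital bases doing the job. (iii) Insensitivity to $O_{p'}(G)$ and to Fong--Reynolds correspondence, which are instances of (i). It is worth noting that over an infinite field a finite-dimensional algebra is always spanned by its units (each $a$ equals $\lambda 1 - (\lambda 1 - a)$ with both terms units for generic $\lambda$), so the entire content of the conjecture lies in the demand that the spanning units can be arranged into a $D\times D$-stable basis; converting the given bifree bipermutation basis of $A$, whose standard representatives $imi$ are as a rule not invertible, into such a basis is the genuine difficulty.

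Next I would carry out the Clifford-theoretic reduction, whose base case is a block with normal defect group. By K\"ulshammer's theorem the source algebra of such a block is a twisted group algebra $k_\alpha[D\rtimes E]$ with $E$ a $p'$-subgroup of $\mathrm{Aut}(D)$ and $\alpha$ inflated from $E$; since $H^2(D,k^\times)=0$ for a $p$-group $D$ over an algebraically closed field of characteristic $p$, the cocycle is trivial on $D$, and one checks that the standard basis $\{u_g\}$ can be rescaled coset-by-coset into a $D\times D$-stable unital basis, so blocks with normal defect group are uniform. Feeding this, together with (i)--(iii), into the usual ``climb a normal subgroup'' machinery (covering blocks, $G$-stable blocks, the K\"ulshammer--Puig description of a block lying over a normal subgroup, and Fong--Reynolds) one reduces Conjecture~\ref{conj-BaGe} to the case where $G$ has a unique minimal normal subgroup and is generated by it together with $D$, that is, to central extensions of finite simple groups and their extensions by outer automorphisms. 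This is the same endpoint at which the reduction machinery for Alperin's weight conjecture or for Brou\'e's conjecture terminates, so I expect the reduction theorems of this note to take precisely this shape.

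Finally one is left with the (quasi-)simple cases, to be treated family by family. Nilpotent blocks are immediate from Puig's theorem, since the source algebra is $\mathrm{End}_k(V)\otimes_k kD$ and $kD$ is visibly uniform (basis $D$), so (i) applies; blocks with cyclic or Klein four defect groups are covered by their known Brauer-tree and explicit source-algebra descriptions; and for groups of Lie type in non-defining characteristic one would pull back along Bonnaf\'e--Rouquier--type derived equivalences (and Puig equivalences where available) to smaller Levi subgroups, leaving the defining-characteristic blocks to be handled directly. I expect the main obstacle to sit exactly here: for groups of Lie type of large rank in non-defining characteristic, and for the genuinely non-nilpotent blocks of the sporadic and alternating groups, there is at present no clean structural model of the source algebra, only Morita- or derived-equivalence data that do not obviously transport the ``uniform'' property. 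It is therefore realistic that this note establishes the reduction theorems above and verifies the conjecture only for those families of finite simple groups whose source algebras are explicitly understood, leaving ``every block of every finite simple group is uniform'' as the residual problem.
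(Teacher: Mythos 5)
The statement you are addressing is an open conjecture, not a theorem: neither your proposal nor the paper actually proves it. The paper's contribution consists of two reduction theorems (Theorems~\ref{thmreduck} and~\ref{thm:12}) and verification for the Mathieu and first Janko groups (Theorem~\ref{thm14-sporadic}), which is exactly the kind of partial progress your final paragraph anticipates. At that meta-level your roadmap --- closure properties, Clifford-theoretic descent to quasi-simple groups, then case-by-case verification --- matches the paper's shape.

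The details, however, diverge in ways that matter. First, your closure property~(i) is misformulated: two source algebras of one and the same block are already isomorphic as interior $D$-algebras (Puig), so no matrix factor arises there, while the source algebras of two Morita-equivalent blocks are \emph{not} related by tensoring with $\mathrm{Mat}_n(k)$. The transfer result the paper actually uses is \cite[Theorem 1.7]{CoTo}, namely that uniformity is preserved under basic Morita equivalence --- a considerably subtler statement than ``strip off a matrix factor.'' Second, you state the descent to quasi-simple groups without restriction, but Theorem~\ref{thm:12} achieves it only under two nontrivial hypotheses: that the block is $\F$-reduction simple (the defect group has no nontrivial proper strongly $\F$-closed subgroup) and that $\Aut_G(D)$ is a $p$-group. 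The first is what makes Claim~3 in the proof work (covered blocks of proper normal subgroups have defect zero, because $D\cap N$ is strongly $\F$-closed), and the second is what makes Proposition~\ref{thm22} work when passing to the kernel $K$ of the outer-action map via $N_G(D_\delta)=N_N(D_\delta)C_G(D_\delta)$ and the crossed-product structure over the $p'$-group $G/K$. Your sketch silently assumes the descent goes through unconditionally; that is precisely where the obstruction sits and why the paper's reduction is narrower than the one the AWC/Brou\'e machinery produces. Third, you do not address the $\Oc$-to-$k$ reduction, which the paper records as Theorem~\ref{thmreduck} and proves via the surjections $A(\phi)\twoheadrightarrow\bar A(\phi)$ and $A(P)\twoheadrightarrow\bar A(P)$ combined with \cite[Theorems 5.1 and 1.2]{BaGe} (or, more simply, by lifting $D\times D$-permutation bases and invoking Nakayama, as the referees' remark notes). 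Finally, your base case (normal defect group via K\"ulshammer's $k_\alpha[D\rtimes E]$ description) is not what the paper leans on; the paper cites \cite[Proposition 1.6]{BaGe} for abelian defect and for blocks with $\F=N_\F(D)$, and it is that criterion, together with explicit fusion-system information from \cite{RuVi} and \cite{AKO}, that drives the sporadic verifications in Theorem~\ref{thm14-sporadic}.
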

We fix a maximal $b$-Brauer  pair $(D,e)$ and denote the fusion system $\F_{(D,e)}(G,b),$ attached to the block $b,$ by $\F.$  This means that $b$ is an $\F$-block and we say, for shortness, that $b$ is an $\F$-\textit{reduction simple} block, if $D$ has no
nontrivial proper strongly $\F$-closed subgroups. The fusion system $\F$ is determined by the $\Oc D-\Oc D$-bimodule structure of $A$ and this fact can be reformulated in terms of permutation bases. By the work of Broto, Levi and Oliver it is known that any fusion system on $D$ has a so-called \textit{characteristic} $D-D$-biset, see \cite[Definition 8.7.9]{Libo2}; these bisets are important for the study of fusion systems and their cohomology. The source algebra $A$ always has a $D-D$-stable basis that is determined, up to an isomorphism of $D-D$-bisets, by the $\Oc D-\Oc D$-bimodule structure of $A.$ This biset is close to being a characteristic biset for $\F,$ but this is not known to hold in general. However, if this biset is included in $A^{\times}$ (hence \cite[Conjecture 1.3]{BaGe} is true) then this biset is also characteristic, see \cite[Proposition 8.7.11]{Libo2}.

In the first main result of this paper, proved  in  Section \ref{sec3}, we show that it is enough to solve Conjecture \ref{conj-BaGe} in order for  \cite[Conjecture 1.5]{BaGe} to hold.
\begin{theorem}\label{thmreduck} With the above notations if $\bar{A}$ is a uniform source $D$-algebra of $\bar{b} $ then $A$ is a  uniform source $D$-algebra of $b.$

\end{theorem}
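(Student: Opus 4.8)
The plan is to lift a $D\times D$-stable unital basis of $\bar A$ to one of $A$ along the canonical surjection $\pi\colon A\to\bar A$. Two structural inputs drive this. First, since $\Oc$ is complete local, $\ker\pi=J(\Oc)A\subseteq J(A)$, so any $a\in A$ whose image $\pi(a)$ is a unit of $\bar A$ is itself a unit of $A$. Second, being a bifree bipermutation $D$-algebra, $A$ is in particular a permutation $\Oc[D\times D]$-module, which makes fixed points under subgroups of $D\times D$ well behaved under reduction mod $J(\Oc)$.

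I would begin by fixing a $D\times D$-stable $\Oc$-basis $X$ of $A$. For any subgroup $S\le D\times D$, restriction makes $A=\Oc X$ a permutation $\Oc S$-module, so $A^{S}$ is $\Oc$-free on the $S$-orbit sums $\sum_{x\in\omega}x$ ($\omega$ running over the $S$-orbits on $X$), and similarly $\bar A^{S}$ is $k$-free on the reduced orbit sums; hence the reduction map $A^{S}\to\bar A^{S}$ is surjective. Next, using that $\bar A$ is uniform by hypothesis, I would fix a $D\times D$-stable unital basis $\bar Y\subseteq\bar A^{\times}$, choose representatives $\bar y_{1},\dots,\bar y_{n}$ of the $D\times D$-orbits on $\bar Y$, and set $S_{m}=\mathrm{Stab}_{D\times D}(\bar y_{m})$, so that $\bar y_{m}\in\bar A^{S_{m}}$. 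By the surjectivity just noted I can choose $y_{m}\in A^{S_{m}}$ with $\pi(y_{m})=\bar y_{m}$; then $y_{m}\in A^{\times}$ by the first input, and $S_{m}\subseteq\mathrm{Stab}_{D\times D}(y_{m})$. Put $Y=\bigcup_{m=1}^{n}(D\times D)\cdot y_{m}$: this is a $D\times D$-stable set of units with $\pi(Y)=\bar Y$.

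To finish, I would check that $Y$ is an $\Oc$-basis of $A$. Since $\pi(Y)=\bar Y$ generates $\bar A=A/J(\Oc)A$ over $k$, Nakayama's lemma shows $Y$ generates $A$ over $\Oc$; on the other hand $|Y|\le\sum_{m}[D\times D:S_{m}]=|\bar Y|=\mathrm{rank}_{\Oc}A$, the inequality because $S_{m}\subseteq\mathrm{Stab}_{D\times D}(y_{m})$ and the equality because the $\bar y_{m}$ represent the $D\times D$-orbits on the basis $\bar Y$. A generating subset of a finitely generated free $\Oc$-module of rank $r$ with at most $r$ elements is automatically a free basis (extend scalars to the fraction field of $\Oc$ to force cardinality $r$, then note that a surjective $\Oc$-linear endomorphism of $\Oc^{r}$ has unit determinant, hence is invertible). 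Therefore $Y$ is a $D\times D$-stable unital basis of $A$, i.e.\ $A$ is a uniform source $D$-algebra of $b$.

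The step I expect to require the most care is the lifting of the orbit representatives: one must lift each $\bar y_{m}$ to a unit of $A$ that is still fixed by the full stabilizer $S_{m}$, and the clean way to achieve both at once is through the permutation-$\Oc[D\times D]$-module structure of $A$ (giving surjectivity of $A^{S_{m}}\to\bar A^{S_{m}}$) together with $\ker\pi\subseteq J(A)$ (giving that lifts of units are units). Beyond assembling these two observations and the elementary rank count, I do not anticipate a genuine obstruction.
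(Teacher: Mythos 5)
Your argument is correct and gives a complete proof. It is, however, a genuinely different route from the paper's main proof, which instead reduces to the Barker--Gelvin criterion \cite[Theorems 5.1 and 1.2]{BaGe}: the authors show there are surjections $A(\phi)\to\bar A(\phi)$ and an isomorphism $A(P)\cong\bar A(P)$ of Brauer quotients, arrange these into a commutative diagram of multiplication maps, and then invoke the structural criterion for uniformity in terms of the bimodule $A(\phi)$. Your proof is the more elementary ``lift the stable basis'' argument, and it is in fact the same idea that the paper records separately in a Remark (attributed to the referees) as a shorter alternative proof. The mechanism of your lifting step is slightly different from that remark's sketch: the remark observes that the image $Y'$ of a permutation basis $X$ of $A$ is isomorphic as a $D\times D$-set to the given unital basis $\bar Y$ of $\bar A$, lifts the resulting $k[D\times D]$-automorphism to an $\Oc[D\times D]$-automorphism, and pushes $X$ through it; whereas you lift orbit representatives directly using surjectivity of $A^{S}\to\bar A^{S}$ (which holds because $A$ is a permutation $\Oc[D\times D]$-module) and close with a cardinality/rank count to see the lifted set is a basis. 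Both variants are sound. What the paper's main proof buys in exchange for its extra machinery is the connection to the fusion-theoretic formulation of uniformity (the $A(\phi)$ picture), which the authors also use elsewhere; what your (and the remark's) approach buys is brevity and independence from \cite[Theorem 5.1]{BaGe}. One small point worth making explicit in your write-up is that $\ker\pi=J(\Oc)A\subseteq J(A)$ because $A$ is a finitely generated module over the complete local ring $\Oc$, so that $1+J(\Oc)A\subseteq A^{\times}$; this is the precise reason lifts of units are units.
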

If we do not impose on the idempotent $i\in (\Oc Gb)^D$ to be primitive but we require that for any subgroup $Q$ of $D$ there is a unique block
$e_Q$ of $\Oc C_G(Q)$ such that $\Br_Q^{\Oc G}(i)e_Q = \Br_Q^{\Oc G}(i),$ then we say that $i$ is an\textit{ almost source idempotent} of $b$ and $i\Oc Gi$ is an \textit{almost source algebra} of $b.$ As a matter of fact  \cite[Theorem 1.2]{BaGe} is given for this more general case. There is also  a similar conjecture for almost source algebras, \cite[Conjecture 1.3]{BaGe}, but we are not interested for the moment about this conjecture. We denote by $\Aut_G(D)$ the group of all group automorphisms of $D,$ given by conjugation with elements of $G.$

In the second main theorem of this paper we obtain  a reduction result saying that if we solve Conjecture \ref{conj-BaGe} for all blocks, with non-trivial defect groups, of all quasi-simple finite groups then this conjecture is true for all $\F$-reduction simple blocks, of finite groups $G$ having defect groups $D$ such that $\Aut_G(D)$ is a $p$-group.

\begin{theorem}\label{thm:12}
If for any finite  quasi-simple group $L$, for any  block idempotent $\bar{d}$ of $kL$ with $D$ a non-trivial defect group, there is uniform  source $D$-algebra of $kL\bar{d}$, then for any finite group $G$ and for any $\F$-reduction simple block  $\bar{b}$ of $kG$ with non-trivial defect group $D$ (where $\F:=\F_{(D,e)}(G,b)$ and $(D,e)$ is a maximal $\bar{b}$-Brauer pair), such that $\Aut_G(D)$ is a $p$-group, there is a uniform  source $D$-algebra  of $kG\bar{b}.$ In particular, any source algebra will be uniform.
\end{theorem}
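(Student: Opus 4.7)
The plan is to reduce to the hypothesized case of quasi-simple groups by exploiting the structural constraints imposed jointly by $\F$-reduction simplicity and the $p$-group hypothesis on $\Aut_G(D)$. As a first step, Theorem \ref{thmreduck} renders the question genuinely one over $k$, so I may work with $\bar{b}$ directly. Quotienting by $O_{p'}(Z(G))$ preserves $D$, the fusion system $\F$, and the source algebra up to $D$-interior algebra isomorphism, and a source-algebra version of the Fong--Reynolds correspondence further allows one to assume $O_{p'}(G)=1$. Under these assumptions the normal structure of $G$ is controlled by $F^*(G)=O_p(G)E(G)$.

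Second, I would concentrate the block on a single component of $G$. For any $N\trianglelefteq G$, the intersection $D\cap N$ is strongly $\F$-closed, hence equals $1$ or $D$ by reduction simplicity. Applied to the subgroup generated by each $G$-orbit of components, and using that $\Aut_G(D)$ is a $p$-group to rule out $p'$-permutation of several simple factors preserving $D$, I would deduce that $D$ lies inside a single component $L$ of $G$. Replacing $L$ by a quasi-simple cover, $\bar{b}$ then covers a unique $G$-invariant block $\bar{d}$ of $kL$ with defect group $D$ and fusion system canonically identified with $\F$.

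Third, I would transport the unital basis. By hypothesis $kL\bar{d}$ admits a uniform source $D$-algebra. To obtain one for $kG\bar{b}$, I would use the source-algebra version of the Dade/Külshammer--Puig theorem for blocks covering blocks of normal subgroups: the source algebra of $\bar{b}$ is built from that of $\bar{d}$ as an extension by a twisted group algebra of a section of $G$ mapping onto a subquotient of $\Aut_G(D)$. Because this subquotient is a $p$-group by hypothesis, its coset representatives may be chosen $D\times D$-stable and unital; concatenated with the uniform basis of $\bar{d}$'s source algebra, they produce the desired basis of $kG\bar{b}$'s source algebra. The ``in particular'' clause then follows since any two source algebras of a block are isomorphic as $D$-interior algebras.

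The main obstacle is the concentration step: while $D\cap E(G)\in\{1,D\}$ is immediate, ruling out that $D$ spreads across a $G$-orbit of several simple factors genuinely requires the $p$-group hypothesis on $\Aut_G(D)$, as otherwise a $p'$-cyclic permutation of components could preserve $D$ without manifesting in $\F$. Once this is handled, the remaining stages reduce to careful bookkeeping with source-algebra extensions, ensuring that the construction of coset representatives in Stage~3 is simultaneously $D\times D$-stable, unital, and compatible with the interior $D$-algebra structure.
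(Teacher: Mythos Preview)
Your route differs substantially from the paper's. The paper runs a minimal-counterexample argument and never looks at $F^*(G)$ or components. Its pivot is Claim~2: using Proposition~\ref{thm22} (this is exactly where the hypothesis that $\Aut_G(D)$ is a $p$-group is spent) together with results of K\"ulshammer, one shows that a minimal $G$ is generated by the conjugates of $D$; reduction simplicity then forces every proper normal subgroup to carry only defect-zero covered blocks (Claim~3), and a second Fong reduction over a maximal normal $N$ yields a central $p'$-extension $\tilde G$ of the \emph{simple} group $G/N$, from which the quasi-simple $L=[\tilde G,\tilde G]$ is extracted. So the paper reaches $L$ top-down as a quotient construction, not bottom-up as a subgroup of $G$.

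Your outline has two genuine gaps. In Stage~2 you only exclude $p'$-permutations of components, but nothing in the hypotheses rules out $D$ being spread over a $G$-orbit of $p^k$ components permuted by a $p$-element: such a configuration is compatible both with $\Aut_G(D)$ being a $p$-group and with $\F$-reduction simplicity, since the individual components are not normal in $G$ and hence $D\cap L_i$ need not be strongly $\F$-closed. The paper bypasses this entirely because once $G=\langle D^G\rangle$, any normal subgroup containing $D$ already equals $G$. In Stage~3 the extension is mischaracterised: when $L\trianglelefteq G$ and $D$ is a common defect group of $\bar d$ and $\bar b$, the grading group governing the passage between source algebras is a $p'$-group (compare Proposition~\ref{thm22}, where $G/N$ is $p'$), not a $p$-section of $\Aut_G(D)$. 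The $p$-group hypothesis on $\Aut_G(D)$ is not used to bound the grading group but to force $N_G(D_\delta)=N_N(D_\delta)C_G(D_\delta)$, so that the homogeneous grading units can be chosen $D$-fixed; your description reverses the roles of these two ingredients.
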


The proof of Theorem \ref{thm:12} is given in Section \ref{sec3}. More precisely, the method of proof for the above theorem relies on the following deduction. Assuming that  there is  a finite group $G$ and an $\F$-reduction simple block $\bar{b}$ of $kG$ 
with non-trivial defect group $D,$ such that $\Aut_G(D)$ is a $p$-group and no source algebra of $kG\bar{b}$ is uniform, then there is quasi-simple finite group $L$ and there is a block $\bar{d}$ of $kL$ with non-trivial defect group such that no source algebra of $kL\bar{d}$ is uniform.

In the last theorem we verify Conjecture \ref{conj-BaGe} for most blocks  of the first  sporadic groups. 
\begin{theorem}\label{thm14-sporadic} Let $G$ be a finite group such that $p$ divides the order of $G$ and let $\bar{b}$ be a block of $kG.$ Assume that one of the following statements is true.
\begin{itemize}
\item[(i)] $G$ is a sporadic Mathieu group but, when $p=3$ and $G\in \{M_{12}, M_{24}\}$ or when $p=2$ and $G\in \{M_{11},M_{12}, M_{22}, M_{23}, M_{24}\},$ $\bar{b}$ is not the principal block;
\item[(ii)] $G\in\{J_1,J_2,J_4\}$  but, when $p=2$ and $G\in\{J_2,J_4\}$ or when $p=3$ and $G=J_4,$ the block $\bar{b}$ is not principal block and it is not the unique $3$-block of maximal defect group $3_+^{1+2}.$
\end{itemize}
Then any source algebra of $kG\bar{b}$ is uniform.
\end{theorem}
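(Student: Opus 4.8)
The plan is to verify the statement block by block, organising the argument by the defect group and relying on the known distribution of $p$-blocks — and of their defect groups — of the Mathieu groups and of $J_1$, $J_2$, $J_4$, available in the modular representation theory literature. First, a block of defect zero has source algebra $k$, which carries the unital basis $\{1\}$; so we may assume the defect group $D$ is non-trivial, and by Conjecture~\ref{conj-BaGe} and \cite[Proposition 6.6.3]{Libo2} it is exactly the blocks with $D\neq 1$ that must be inspected. Inspecting the order of each group one then sees that for every prime $p$ dividing $|G|$ outside a short explicit list the Sylow $p$-subgroup — hence every defect group — is cyclic; this already accounts for $M_{23}$ and $M_{24}$ at $p\in\{5,7,11,23\}$, for $M_{22}$ at $p\in\{5,7,11\}$, for $M_{11}$ and $M_{12}$ at $p\in\{5,11\}$, for $J_1$ at $p\in\{3,5,7,11,19\}$, for $J_2$ at $p=7$, for $J_4$ at each prime dividing its order to the first power, and for every defect-zero non-principal block elsewhere. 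For a block with cyclic defect group the source algebra is a Brauer tree algebra whose interior $D$-algebra structure is explicit, and the existence of a $D\times D$-stable unital basis is known in that case; this disposes of all of these blocks simultaneously.

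What is left is a finite list of blocks covered by the theorem whose defect group is non-cyclic, and after the exclusions in the statement each of these has defect group elementary abelian of order $p^2$, elementary abelian of order $8$, or extraspecial of order $p^3$. The abelian cases are: the principal $3$-blocks of $M_{11}$, $M_{22}$, $M_{23}$, and any non-principal $3$-block of $M_{12}$ or $M_{24}$ of defect $2$, all with defect group $C_3\times C_3$; the principal $5$-block of $J_2$, with defect group $C_5\times C_5$; and the principal $2$-block of $J_1$, with defect group $C_2\times C_2\times C_2$. Here I would use the explicitly known source algebras of these blocks — for the $C_3\times C_3$ cases from the classification of source algebras of blocks with that defect group, and for the principal $2$-block of $J_1$ from its thoroughly studied structure — and check uniformity on a representative. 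Concretely, in each of these cases the block is Puig equivalent to a block of a finite group with normal defect group $D$, and for a block with normal defect group the source algebra is a twisted group algebra $k_{\alpha}[D{:}E]$ with $\alpha$ inflated from the inertial quotient $E$; the natural basis of $k_{\alpha}[D{:}E]$ coming from the group elements of $D{:}E$ is then a $D\times D$-stable unital basis, and since being a uniform source $D$-algebra is manifestly a Puig-equivalence invariant, each of these blocks is uniform.

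The substantive cases — and the step where I expect essentially all of the difficulty to lie — are the blocks with extraspecial defect group of order $p^3$: the principal $3$-block of $J_2$ (defect group $3_+^{1+2}$), the principal $11$-block of $J_4$ (defect group $11_+^{1+2}$), and any non-principal $3$-block of maximal defect of $M_{24}$ should one occur, none of which reduces to a normal-defect situation. For $J_2$ at $p=3$ the inertial quotient is cyclic of order $8$ acting on the Frattini quotient as a Singer cycle, and the principal block has been analysed in detail in the literature (its Morita and Puig class being known); I would exhibit a $D\times D$-stable unital basis on an explicit representative. For $J_4$ at $p=11$ I would use the $11$-local data through which $J_4$ was originally constructed to show that the fusion system forces the principal $11$-block to be inertial, so that again it is Puig equivalent to a block with normal defect group and the previous paragraph applies. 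Once all of these verifications are assembled, and because a source algebra of a block is well defined up to isomorphism of interior $D$-algebras — so that it is uniform as soon as one source algebra is — the stated conclusion follows in every case.
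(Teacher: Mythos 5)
Your overall strategy (sort the blocks by defect group, handle trivial/cyclic defect, then the remaining abelian and extraspecial cases) mirrors the paper's, but the way you close out each case diverges from the paper and leaves genuine gaps. The paper's entire load is carried by a single result, \cite[Proposition 1.6]{BaGe}, which gives uniformity whenever the defect group is abelian \emph{or} the fusion system is controlled by the normalizer of the defect group, i.e.\ $\F = N_\F(D)$. Once one has that result in hand, the cyclic, Klein four, $C_3\times C_3$, $C_5\times C_5$, $C_2\times C_2\times C_2$ and $C_3\times C_3$-for-$J_4$ cases all go through immediately and uniformly without any Brauer tree or equivalence-theoretic input; you do not invoke this proposition at all, and the substitutes you propose do not close the gap.

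Concretely: for the non-cyclic abelian cases you assert that each block ``is Puig equivalent to a block of a finite group with normal defect group'', so that its source algebra is a twisted group algebra of $D\!\!\rtimes E$. This is much stronger than what is known or needed. For the principal $3$-blocks of $M_{11}$, $M_{22}$, $M_{23}$ and the principal $2$-block of $J_1$, neither Puig equivalence to the Brauer correspondent nor inertiality is something you can take for granted, and in several of these cases the fusion is not controlled, so the block is not inertial. The paper avoids this entirely because abelian $D$ alone suffices via \cite[Proposition 1.6]{BaGe}. Similarly, for the principal $3$-block of $J_2$ and the principal $11$-block of $J_4$ (defect groups $3_+^{1+2}$ and $11_+^{1+2}$), your plan is either ``exhibit a $D\times D$-stable unital basis on an explicit representative'' (a placeholder with no argument) or to show the block is inertial via ``$11$-local data'' (again unjustified and stronger than needed). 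The paper instead shows that $\F_0 = N_{\F_0}(D)$ using \cite[Remark 1.4, Lemma 3.2]{RuVi} together with \cite[Proposition 4.5, Part I]{AKO} (the only $\F_0$-centric $\F_0$-radical subgroup is $D$ itself) and then applies \cite[Proposition 1.6]{BaGe}; controlled fusion is a much weaker and more readily verifiable condition than inertiality. Finally, your enumeration misses two blocks the paper has to handle: the unique non-principal $2$-block of $M_{12}$ with Klein four defect group, and the block $\bar{b}_2$ of $J_4$ at $p=3$ with elementary abelian defect group of order $9$. Both are again disposed of by \cite[Proposition 1.6]{BaGe}, but they need to appear in the list.
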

The proof of Theorem \ref{thm14-sporadic} is given in Section \ref{sec4}. As a notational aspect, if $\mathbf{A}$ is any $G$-algebra and $H_{\alpha}$ is a pointed group, where $H$ is a subgroup of $G,$ the \textit{localization} of $\mathbf{A}$ with respect to $H_{\alpha}$ is the $\Oc$-algebra $\mathbf{A}_{\alpha},$ that is $\mathbf{A}_{\alpha}=i\mathbf{A}i,$ where $i\in\alpha;$ see \cite[Section 13]{The} for more details. 
\section{Uniform source algebras of blocks of normal subgroups}\label{sec2}
The purpose of this section is to prove a crucial argument needed in the proof of Theorem \ref{thm:12} (Claim 2). We allow blocks over $\Oc$ although we need the results for blocks over $k.$ In this section let $N$ be a normal subgroup of $G,$ let $c$ be a block idempotent of $\Oc N$ such that $c\in (\Oc N)^G$ and let $b$ be a block idempotent of $\Oc G.$ 

Let $D_{\delta}\leq N_{\{c\}}$ be a defect pointed group of $c$ in $N.$ We recall the notation of 
$[G/N]$ as a system of representatives of left cosets of $N$ in $G$ and introduce
 $$\mathbf{A}:=(\Oc G)_{\delta}=\oplus_{x\in [G/N]}\mathbf{A}_{\bar{x}}$$ as $G/N$-graded, $G$-interior algebra, where $\mathbf{A}_{\bar{x}}:=j(\Oc Nx)j,$ for some  $j\in\delta$ and $x\in [G/N].$ 
\begin{proposition} \label{prop 2.2}If $\mathbf{A}_1$ is a uniform source $D$-algebra of $c$, then for any $x\in[G/N],$ $\mathbf{A}_{\bar{x}}$  has a $D\times D$-stable unital basis. In particular $\mathbf{A}$ has a $D\times D$-stable unital basis.
\end{proposition}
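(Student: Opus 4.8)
The plan is to exploit the $G/N$-grading of $\mathbf{A}$ together with the hypothesis that the identity component $\mathbf{A}_1$ is a uniform source $D$-algebra of $c$. First I would recall that $\mathbf{A}_1 = j\Oc Nj$ with $j\in\delta$, so $\mathbf{A}_1$ is (isomorphic to) a source algebra of the block $c$ of $\Oc N$ with defect pointed group $D_\delta$; by assumption it has a $D\times D$-stable unital basis $\mathcal{B}$. The key structural fact I would use is that each graded component $\mathbf{A}_{\bar{x}} = j(\Oc Nx)j$ is an invertible $\mathbf{A}_1$-bimodule (this is the standard Clifford-theoretic feature of the crossed-product-like grading coming from a $G$-stable block of a normal subgroup, cf.\ \cite[Section 13]{The} and the structure of $(\Oc G)_\delta$ as a $G/N$-graded algebra with $\mathbf{A}_{\bar x}\mathbf{A}_{\overline{x^{-1}}} = \mathbf{A}_1$). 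In particular, picking a unit $u_x$ in $\mathbf{A}_{\bar x}\cap (\mathbf{A})^\times$ — such a unit exists because the grading is a crossed product once we allow ourselves $[G/N]$-representatives, and this is where being a \emph{source} algebra (via a $G$-interior structure) is used to get actual invertible homogeneous elements — we get $\mathbf{A}_{\bar x} = \mathbf{A}_1 u_x = u_x \mathbf{A}_1$.

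Next I would transport the unital basis. If $\mathbf{A}_{\bar x} = \mathbf{A}_1 u_x$ with $u_x$ a unit, then $\mathcal{B} u_x := \{ \beta u_x : \beta \in \mathcal{B}\}$ is an $\Oc$-basis of $\mathbf{A}_{\bar x}$ consisting of units of $\mathbf{A}$, i.e.\ a unital basis. The remaining point is the $D\times D$-stability. The action of $(d_1,d_2)\in D\times D$ on $\mathbf{A}$ is via the interior $D$-algebra structure, i.e.\ $\beta u_x \mapsto d_1(\beta u_x) d_2$. The subtlety is that $D$ does not normalise the coset $Nx$ in general, so $D\times D$ need not preserve $\mathbf{A}_{\bar x}$ as written; however, $D\times D$ permutes the collection $\{\mathbf{A}_{\bar x}\}_{x\in[G/N]}$, so I would instead argue that the full basis $\mathcal{B}' := \bigcup_{x\in[G/N]} \mathcal{B} u_x$ of $\mathbf{A} = \bigoplus_x \mathbf{A}_{\bar x}$ is $D\times D$-stable. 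For this I need to check that the $D\times D$-orbit of $\mathcal{B}u_x$ lands (setwise) inside $\mathcal{B}'$: given $(d_1,d_2)$ and $\beta\in\mathcal{B}$, write $d_1 u_x d_2$ in terms of the homogeneous unit $u_{x'}$ of the component $\mathbf{A}_{\overline{x'}}$ that $d_1(Nx)d_2$ lands in, so $d_1 u_x d_2 = \gamma u_{x'}$ for some $\gamma\in\mathbf{A}_1^\times$; then $d_1(\beta u_x)d_2 = (d_1\beta d_1^{-1})(d_1 u_x d_2) = ({}^{d_1}\beta)\gamma u_{x'}$, and since $\mathcal{B}$ is $D\times D$-stable (so in particular stable under ${}^{d_1}(-)$ up to the basis) the issue reduces to whether $({}^{d_1}\beta)\gamma$ ranges over $\mathcal{B}$ as $\beta$ ranges over $\mathcal{B}$. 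This forces a compatibility requirement between the chosen units $u_x$ and the basis $\mathcal{B}$ — essentially that $u_x$ and $\gamma$ can be chosen so that left/right multiplication by them permutes $\mathcal{B}$. I expect the cleanest route is: choose $u_x$ to be a basis element of some fixed $D\times D$-stable basis of $\mathbf{A}$ (which exists since $A$ is a bifree bipermutation $D$-algebra, and this descends to $\mathbf{A}$) lying in $\mathbf{A}_{\bar x}$, and then note that $\mathbf{A}_1$ acting by a unital basis sends a $D\times D$-basis-element to a $D\times D$-orbit of unital elements; one then checks the orbits tile.

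The main obstacle, and the step requiring the most care, is precisely reconciling two a priori unrelated bases: the \emph{unital} basis $\mathcal{B}$ of $\mathbf{A}_1$ and a \emph{homogeneous-unit} system $\{u_x\}$ compatible with the $D\times D$-action on the grading. I would handle this by first establishing (or quoting, if it is implicit in \cite[Theorem 1.2]{BaGe} or the source-algebra formalism of \cite{The}) that one may choose the $u_x$ inside $\mathbf{A}^\times$ \emph{and} so that conjugation by $u_x$ is an automorphism of $\mathbf{A}_1$ stabilising $\mathcal{B}$ — this is plausible because $\mathbf{A}_1$ is a source algebra of $c$ and the fusion-theoretic rigidity pins down the bimodule structure of each $\mathbf{A}_{\bar x}$ up to the relevant group actions. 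Once that is in place, $\mathcal{B}u_x$ is a $D\times D$-stable unital basis of each $\mathbf{A}_{\bar x}$ (for the subgroup of $D\times D$ that stabilises the component), the union over $[G/N]$ is a $D\times D$-stable unital basis of $\mathbf{A}$, and the ``in particular'' clause follows immediately since $\mathbf{A}=\bigoplus_x\mathbf{A}_{\bar x}$ as $\Oc$-modules. Finally I would remark that the $x$-independence of the conclusion (``for any $x$'') is automatic once the global basis is produced, but can also be seen directly since $D\times D$ acts transitively enough on the components appearing in a single $D$-orbit, reducing to representatives.
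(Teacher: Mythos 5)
Your high-level scheme — transport the unital basis of $\mathbf{A}_1$ by a homogeneous unit into each graded component — is indeed the paper's strategy, but the proposal contains a conceptual error and leaves the crucial step unresolved.

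First, the worry that ``$D$ does not normalise the coset $Nx$, so $D\times D$ need not preserve $\mathbf{A}_{\bar x}$'' is misplaced: $D$ is a defect group of the block $c$ of $\Oc N$, so $D\subseteq N$, and hence $d_1(\Oc Nx)d_2\subseteq \Oc N x N=\Oc Nx$. Each component $\mathbf{A}_{\bar x}=j(\Oc Nx)j$ is therefore already $D\times D$-stable; there is no permutation of components to track and no need to argue about a global union. Your whole discussion of $d_1(Nx)d_2$ landing in another component, and the tiling of orbits, is a red herring.

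Second, you never actually produce the homogeneous unit $u_x$, and the ``main obstacle'' you isolate — choosing $u_x$ so that conjugation by $u_x$ stabilises $\mathcal{B}$ — is both unresolved and stronger than what is needed. The paper's construction is: by the Frattini argument $G=N_G(D_\delta)N$, choose each representative $x\in[G/N]$ with $x\in N_G(D_\delta)$; then ${}^xj$ and $j$ both lie in $\delta$, so ${}^xj={}^{a_x}j$ for some $a_x\in\bigl((\mathbf{A}_1)^D\bigr)^{\times}$, and $u_x:=a_x^{-1}x$ (times $j$) is a homogeneous unit with $\mathbf{A}_{\bar x}=\mathbf{A}_1\cdot u_x$. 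The only property of $u_x$ that matters is that $u_x\,d=({}^xd)\,u_x$ for $d\in D$ — which holds because $x$ normalises $D$ (so ${}^xd\in D$) and $a_x$ is $D$-fixed (so commutes with ${}^xd$). Then for $u_1,u_2\in D$ and $\omega\in\Omega$,
\[
u_1(\omega u_x)u_2 = \bigl(u_1\,\omega\,({}^xu_2)\bigr)u_x,
\]
and $u_1\,\omega\,({}^xu_2)\in\Omega$ by $D\times D$-stability of $\Omega$. No compatibility between the left/right multiplication action of $u_x$ and the basis $\Omega$ is required, only this intertwining with the $D$-action. Your appeal to ``fusion-theoretic rigidity'' to hope such a choice exists is therefore both unnecessary and does not close the gap as stated.

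\end{document}
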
 
\textit{Proof}
By the Frattini argument $G=N_G(D_{\delta})N,$ hence any representative $x\in [G/N]$ may be chosen such that $x\in N_G(D_{\delta}).$ After this, by letting $j\in\delta,$ we have
${}^xj={}^{a_x}j$ for some $a_x\in ((\mathbf{A}_1)^D)^{\times},$ equivalently $a_x^{-1}xj=ja_x^{-1}x.$ 

The decomposition $\mathbf{A}=\bigoplus_{x\in [G/N]}\mathbf{A}_{\bar{x}}$ is well known and any $\mathbf{A}_{\bar{x}}$ verifies $$\mathbf{A}_{\bar{x}}=\mathbf{A}_1\cdot a_x^{-1}x,$$ for any $\bar{x}\in G/N.$ Let $x\in[G/N],$ with $x\in N_G(D_{\delta}).$ If $\Omega$ is a $D\times D$-stable unital basis of $\mathbf{A}_1$  then $$\Omega_x:=\{\omega a_x^{-1}x | \omega \in\Omega\}$$ is
 a $D\times D$-stable unital basis of $\mathbf{A}_{\bar{x}}.$ The elements of each $\Omega_x$ are clearly linearly independent. For any $\omega \in\Omega,u_1,u_2\in D$ we have
 $$u_1(\omega a_x^{-1}x) u_2=u_1\omega ( {}^xu_2)( a_x^{-1}\ x)\in\Omega_x.$$

\begin{proposition}\label{thm22} Let $N$ be a normal subgroup of $G$ such that $G/N$ is a $p'$-group. Let $b$ be a block of  $\Oc G$ such that $b\in Z(\Oc N)$ and remains a block of $\Oc N.$ Let $D_{\delta}$ be a defect pointed group of $N_{\{b\}}$ and we assume that $(\Oc N)_{\delta}$ is a uniform source $D$-algebra and that $\Aut_G(D)$ is a $p$-group.
Then $(\Oc G)_{\delta}$ is a uniform source $D$-algebra of $b$ (as a block of $\Oc G$). Moreover, any source $D$-algebra of the block $b$ of $\Oc G$ is uniform.
\end{proposition}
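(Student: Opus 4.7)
The plan is to verify two things about $\mathbf{A}:=(\Oc G)_\delta$: that it carries a $D\times D$-stable unital basis, and that it is genuinely a source $D$-algebra of $b$ viewed as a block of $\Oc G$; the \emph{moreover} clause will then follow from the uniqueness of source algebras up to interior $D$-algebra isomorphism. A preliminary observation: by the Frattini argument $G=N_G(D_\delta)N$, so $[G:N]=[N_G(D_\delta):N_N(D_\delta)]$ is a $p'$-number; combined with the hypothesis that $\Aut_G(D)$ is a $p$-group, the order of $\Aut_G(D_\delta)/\Aut_N(D_\delta)$ divides both $|\Aut_G(D)|$ and $|G/N|$, hence is $1$. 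Thus $\Aut_G(D_\delta)=\Aut_N(D_\delta)$, which permits a choice of coset representatives $[G/N]$ inside $C_G(D)\cap N_G(D_\delta)$.

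For the existence of a unital basis, I apply Proposition~\ref{prop 2.2}: its hypothesis is exactly our assumption that $\mathbf{A}_1=(\Oc N)_\delta$ is a uniform source $D$-algebra of $b$, so each $\mathbf{A}_{\bar x}$ admits a $D\times D$-stable unital basis, and their union is a $D\times D$-stable unital basis of $\mathbf{A}=\bigoplus_{\bar x}\mathbf{A}_{\bar x}$.

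The second step is that $j\in\delta$ is a source idempotent of $b$ regarded as a block of $\Oc G$, i.e.\ primitive in $(\Oc Gb)^D$ and with $\Br_D^{\Oc G}(j)\neq 0$. Since $D\subseteq N$, $\Br_D^{\Oc G}$ restricts to $\Br_D^{\Oc N}$ on $(\Oc N)^D$, so $\Br_D^{\Oc G}(j)\neq 0$ is immediate; moreover every $p$-subgroup of $G$ lies in $N$, from which a standard argument identifies the defect group of $b$ in $G$ with $D$. What remains is the primitivity of $j$ in $(\Oc Gb)^D$, and this is the main obstacle. The idea is to exploit the hypothesis that $b$ is a block of both $\Oc G$ and $\Oc N$ (equivalently, the unique block of $\Oc G$ covering $b$ is $b$ itself), together with the equality $\Aut_G(D_\delta)=\Aut_N(D_\delta)$, to rule out any nontrivial primitive decomposition of $j$ in the $G/N$-graded algebra $(j\Oc Gj)^D$, whose degree-one component $(j\Oc Nj)^D$ is local by hypothesis.

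Granted this primitivity, $(\Oc G)_\delta$ is a source $D$-algebra of the $\Oc G$-block $b$ equipped with a $D\times D$-stable unital basis, hence uniform. For the \emph{moreover} part, any other source $D$-algebra of $b$ in $\Oc G$ takes the form $i\Oc Gi$ for some source idempotent $i$, and any two such source algebras are isomorphic as interior $D$-algebras (up to a twist by an automorphism of $D$); the property of admitting a $D\times D$-stable unital basis is preserved by any such isomorphism, so every source algebra of $b$ in $\Oc G$ is uniform.
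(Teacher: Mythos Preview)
Your outline is structurally sound, but the argument has a genuine gap at precisely the point you yourself flag as ``the main obstacle'': the primitivity of $j$ in $(\Oc Gb)^D$. You write ``the idea is to exploit \ldots\ to rule out any nontrivial primitive decomposition'' and then proceed with ``Granted this primitivity''---but nothing between those two sentences constitutes a proof. Knowing that $(j\Oc Gj)^D$ is $G/N$-graded with local identity component is not enough; a graded algebra over a local ring need not be local. What is needed is that $(j\Oc Gj)^D$ is \emph{strongly} $G/N$-graded (each homogeneous component contains a unit), after which one invokes the standard fact (e.g.\ \cite[Corollary 1.6.8, Theorem 1.5.14]{Mar}, or \cite[Lemma 6.4]{HuLiZh}) that a crossed product of a local ring by a $p'$-group is again local.

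Interestingly, your preliminary observation---that coset representatives $x\in[G/N]$ may be chosen in $C_G(D)\cap N_G(D_\delta)$---gives a cleaner route to strong grading than the paper's own argument, yet you do not exploit it. With such an $x$, the element $a_x^{-1}x$ (with $a_x\in((j\Oc Nj)^D)^\times$ satisfying ${}^xj={}^{a_x}j$) lies in $(j\Oc Gj)^D$ directly, since both $a_x$ and $x$ centralise $D$; it is a homogeneous unit of degree $\bar{x}$, and you are done. The paper instead takes representatives only in $N_G(D_\delta)$, observes via $N_G(D_\delta)=N_N(D_\delta)C_G(D_\delta)$ that conjugation by $x$ on $D$ is an $\F_{(N,b)}(D,\hat{e})$-isomorphism, and then invokes the \emph{uniformity} of $(\Oc N)_\delta$ through \cite[Theorem 1.2]{BaGe} to produce a unit $r\in(j\Oc Nj)^\times$ with $l r^{-1}({}^xl^{-1})=r^{-1}$ for all $l\in D$; the homogeneous $D$-fixed unit is then $r^{-1}a_x^{-1}x$. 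Either way one must explicitly exhibit the homogeneous units and cite the locality criterion---the step your proposal omits.
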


\begin{proof}
Let $(D,\hat{e})$ and $(D,e)$ denote maximal $b$-Brauer pairs, one for $b$ as a block of $\Oc N$ and one for $b$ as a block of $\Oc G,$  respectively. Choose $j\in \delta$ such that $(\Oc N)_{\delta}=j\Oc Nj$ is a source algebra of the block $b$ of $\Oc N.$ Recall that $C_G(D_{\delta})$ is the group of elements $g\in C_G(D)$ such that ${}^g\delta=\delta$ and similarly for $N_G(D_{\delta}).$ Let $x\in N_G(D_{\delta}).$  

We show next that $N_G(D_{\delta})=N_N(D_{\delta})C_G(D_{\delta}),$ hence  the conjugation by $x$ from $D$ to ${}^x D$ is an $\mathcal{F}_{(N,b)}(D,\hat{e})$-isomorphism. Let $\Psi:N_G(D_{\delta})\rightarrow \Aut_G(D)$ be the group homomorphism given by $\Psi(g)=c_g, g\in N_G(D_{\delta}),$ where $c_g$ is the conjugation by $g.$ Since $\mathrm{Ker}(\Psi)=C_G(D_{\delta})$ it follows that $N_G(D_{\delta})/C_G(D_{\delta})$ is a $p$-group, hence $N_G(D_{\delta})/N_N(D_{\delta})C_G(D_{\delta})$ is also a $p$-group. But,  by the Frattini argument $G=N_G(D_{\delta})N,$ it follows that $G/N\cong N_G(D_{\delta})/N_N(D_{\delta})$ is a $p'$-group, hence $N_G(D_{\delta})/N_N(D_{\delta})C_G(D_{\delta})$ is also a $p'$-group, thus proving the claim.

Since $(\Oc N)_{\delta}$ is uniform,   by \cite[Theorem 1.2]{BaGe} the $\mathcal{F}_{(N,b)}(D,\hat{e})$-isomorphism $D\to {}^{x}D$ determines a unit $r\in ((\Oc N)_{\delta})^{\times}$ such that 
$$({}^{x^{-1}}l) (r^{-1}) l^{-1}=r^{-1},$$ for any $l\in D.$  The last equality is equivalent to $$l(r^{-1})( {}^{x} l^{-1})=r^{-1},$$ for any $l\in D.$
Similarly  to the proof of Proposition \ref{prop 2.2} there is $a_x\in ((j\Oc N j)^D)^{\times}$ such that  $r^{-1}a_x^{-1}x$ is a homogeneous unit of $(j\Oc Gj)^D.$ It follows that the $G/N$-graded algebra $(j\Oc Gj)^D$ is actually a strongly $G/N$-graded algebra. Then \cite[Corollary 1.6.8. and Theorem 1.5.14.]{Mar} assure us that $(j\Oc Gj)^D$ is a local ring, since $G/N$ is of  $p'$-order; for a more recent exposition of this argument see \cite[Lemma 6.4]{HuLiZh}. Next, according to \cite[Theorem 4.4.4]{Libo1}, we obtain that $j\Oc Gj$ is a source algebra of $b$ as a block of $\Oc G.$  Finally, note that Proposition \ref{prop 2.2} implies that $j\Oc Gj$  is  uniform. 
\end{proof}
\section{Proofs of Theorem \ref{thmreduck} and \ref{thm:12}} \label{sec3}
\begin{proof} \textbf{(of Theorem \ref{thmreduck}).}
Let $\mathcal{F}$ be the fusion system of $b$ and $\bar{b}$ and we fix $\phi:Q\rightarrow P$ a group isomorphism such that $\phi\in \Hom_{\F}(Q,P),$ where $Q,P\leq D.$

We will show that there are surjective $\Oc$-module homomorphisms $A(\phi)\rightarrow \bar{A}(\phi)$ and $A(\phi^{-1})\rightarrow \bar{A}(\phi^{-1})$   and  that the Brauer quotients $A(P)$ and $\bar{A}(P)$ are isomorphic  $k$-algebras; see \cite[Section 4]{BaGe} for $A(\phi), A(\phi^{-1}).$ 

Indeed, the surjective $\Oc$-algebra homomorphism $(\Oc G)^P\rightarrow (kG)^P,$ admitting the kernel $J(\Oc)(\Oc G)^P,$ determines the $k$-algebra isomorphisms
between the Brauer quotients

$$ (\Oc G)^P/(J(\Oc)(\Oc G)^P+ \sum_{T<P}(\Oc G)_T^P)\cong (kG)^P/\sum_{T<P}(kG)_T^P\cong kC_G(P).
$$

So  the Brauer homomorphism  $(\Oc G)^P\rightarrow (\Oc G)(P)$  can be obtained as the following composition of surjective $\Oc $-algebra homomorphisms
$$(\Oc G)^P\to (kG)^P\to (k G)(P).$$
The restriction to $A$ of the above composition determines the surjective  $\Oc$-algebra homomorphism 
$A^P\rightarrow \bar{A}(P).$
The kernel of this map is $$i(J(\Oc)(\Oc G)^P+\sum_{T<P} (\Oc G)_T^P)i=J(\Oc)A^P+\sum_{T<P}A_T^P,$$ where $i$ is our fixed source idempotent determining $A.$ It follows that
$$A(P)=A^P/(J(\Oc)A^P+\sum_{T<P}A_T^P)\cong \bar{A}(P),$$ see also
\cite[Proposition 5.6]{PuigNil}. 
Similarly, the Brauer homomorphism (of $\Oc$-modules) with respect to $\Delta(\phi)$ may be viewed as the following composition of $\Oc$-module surjective homomorphisms
$$(\Oc G)^{\Delta(\phi)}\rightarrow (kG)^{\Delta(\phi)}
\rightarrow (kG)(\phi),$$
that restricts to the surjective homomorphism of $\Oc $-modules 
\[A^{\Delta(\phi)}\rightarrow \bar{A}(\phi).\]
Hence we have a surjective homomorphism $A(\phi)\rightarrow \bar{A}(\phi),$ of $k$-vector spaces. Similar arguments hold for $\phi^{-1}$ in place of $\phi $ and $Q$ in place of $P.$ Finally, since $\bar{A}$ is uniform and the vertical maps in the  next commutative diagram are surjective 

\begin{equation*}
\xymatrix{ \bar{A}(\phi)\times \bar{A}(\phi^{-1}) \ar@{->>}[r]&\bar{A}(P) \\
   A(\phi)\ar@{->>}@<-5ex>[u] \ar@{->>}@<5.5ex>[u] \times A(\phi^{-1})\ar[r]& A(P)\ar@{>->>}[u]}
\end{equation*}
by \cite[Theorems 5.1 and 1.2]{BaGe}, it follows  that $A$ is uniform.
\end{proof}
The following remark, that includes a much shorter proof of Theorem \ref{thmreduck}, was pointed to us by the referees.
\begin{remark}
By standard arguments, if $M$ is
permutation $\Oc P$-module for some finite $p$-group $P$, then any $P$-stable
basis $Y$ of the $kP$-module $M/J(\Oc)M$ lifts to some $P$-stable basis of $M$ itself.
(Take a permutation basis $X$ of $M$, its image $Y'$ in $M/J(\Oc)M$ is isomorphic
to $Y$ as a $P$-set, so we have a $kP$-automorphism of $M/J(\Oc)M$  sending $Y'$ to $Y,$
and by standard facts, this automorphism lifts to an $\Oc P$-automorphism of $M,$
and then the image of $X$ under this automorphism of $M$ lifts $Y$).
Applied to a source algebra and $D \times D$ instead of $P,$ using that any inverse 
image of an invertible element over $k$ is invertible over $\Oc$ by Nakayama's Lemma,
it follows that it is equivalent for a source algebra over $k$ or $\Oc$ to have a permutation
basis of invertible elements.
\end{remark}
\begin{proof} \textbf{(of Theorem \ref{thm:12}).} We assume that $G$ is a finite group of minimal order with respect to having an $\F$-reduction simple block $\bar{b}$ of non-trivial defect  group $D$ such that no source $D$-algebra of $kG\bar{b}$ is uniform. We divide the  proof in several claims.

\textit{Claim 1.} For any $H,$ a normal subgroup of $G,$ such that $p$ divides the order of $H$ and such that $\bar{c}$ is a block of $kH$ covered by $\bar{b},$ it follows that $\bar{c}$ is $G$-stable. \\
This is true by a well-known Clifford theoretic fact and by the minimality of $|G|,$ see \cite[Proposition 2.13]{art:Kess2006}.

\textit{Claim 2.} The group $G$ is generated by all conjugates of $D.$ \\ For showing this, let $H:=<{}^gD| g\in G>,$ which  is a normal subgroup of $G.$ Using Claim $1$, let $\bar{c}$ be the unique block of $kH$ covered by $\bar{b}.$ We know that $G$ acts by conjugation on $kH\bar{c}$ and hence we  have a group homomorphism from $G$ to the group of outer automorphisms (as $k$-algebras) of $kH\bar{c}.$  We denote by $K$ the normal subgroup of $G$ obtained as the kernel of this group homomorphism, see \cite[3.2]{art:Kess2006} or \cite[Section 5]{art:Kuels1995}. By Claim 1 we consider $\bar{c_1}$ the unique block of $kK$ covered by $\bar{b}.$ By \cite[Theorem, p.303 ]{art:Kuels1995} we know that $G/K$ is a $p'$-group, $\bar{c_1}$ is in fact $\bar{b}$ (having the same defect group $D$ in $K$) which remains a block of $kK$ and  with the block algebra $kG\bar{b}$ being a  crossed product  of $G/K$ with $kKb.$ Making suitable choices of source idempotents and applying Proposition \ref{thm22} we obtain that $kK\bar{b}$ has no uniform source algebra. By the  minimality assumption we obtain that $G=K.$ Since $G=K$ and $\bar{b},\bar{c}$  are blocks of $kG,kH,$ respectively, having a common defect group $D,$ by \cite[Theorem 7]{art:Kuels1990} we get that $kG\bar{b}$ and $kH\bar{c}$ have isomorphic source algebras. By contradiction, it follows that $kH\bar{c}$ has no uniform source $D$-algebra (see \cite[Theorem 1.7]{CoTo} for the more general case of basic Morita equivalences); hence again the minimality assumption forces the equality $G=H.$

\textit{Claim 3.} For any proper normal subgroup $N$ of $G,$ if $\bar{b'}$ is a block of $kN$ covered by $\bar{b},$ then $\bar{b'}$ is with defect zero.\\
By Claim 1 we assume that $\bar{b'}$ is $G$-stable and then by \cite[Theorem 6.8.9, (ii)]{Libo2} we have that $D\cap N$ is a defect group of $\bar{b'}.$ If $D\cap N\neq \{1\},$ since $D\cap N$ is always strongly $\F$-closed and $\bar{b}$ is $\F$-reduction simple, it follows that $D\cap N=D.$ That is $D\subseteq N,$ hence $D$ and all its conjugates are in $N.$ Using Claim 2 it follows that $N=G,$ a contradiction.

\textit{Claim 4.} Under  the assumptions of Claim 3, by a variation of second Fong's reduction (see \cite[Proof of Theorem 3.1]{art:Kess2006}, \cite[Proposition 4.7]{HiLa} or \cite[Proposition 2.1]{KeKoLi}), there is a central $p'$-extension
\[
\xymatrix{
1\ar[r] & Z\ar[r]&\tilde{G}\ar[r]& G/N\ar[r]&1,}
\]
and there is an $\F$-block $\bar{c}$ of $k\tilde{G}$ such that $kG\bar{b}$ is basically Morita equivalent to $k\tilde{G}\bar{c}.$  So, by \cite[Theorem 1.7]{CoTo} $k\tilde{G}\bar{c}$ has no uniform source algebra.

\textit{Claim 5.} We assume now that $N$ is maximal proper subgroup of $G.$ Applying Claim 4 in this case, we get a central $p'$-extension

\[
\xymatrix{
1\ar[r] & Z\ar[r]&\tilde{G}\ar[r]& \bar{G}\ar[r]&1,}
\]
where $\bar{G}:=G/N$ is a simple finite group. It is clear that $Z(\tilde G)/Z \unlhd \tilde G/Z \cong \overline G,$ hence $Z=Z(\tilde G)$ is a cyclic $p'$-subgroup. 
Again by Claim 4, there is an $\mathcal F$-block $\bar{c}$ of $k \tilde G$ with no uniform source algebra. Let $L=[\tilde G, \tilde G]$.\\ 
First note that if $\tilde G/Z \cong \overline G$ is a cyclic group, then $\tilde G$ must be abelian, see proof of \cite[Lemma 3.4]{serwene}, hence $L=1$ in that case. But if $ \tilde{G}$  is abelian, it follows that $Z=\tilde{G},$ hence $\bar{G}=1,$ a contradiction. \\
We have that $\tilde G=LZ$, $L$ is a quasi-simple group and $Z(L) \subseteq Z $ is a cyclic $p'$-group, see again proof of \cite[Lemma 3.4]{serwene}. There is a short exact sequence:

\[
\xymatrix{
1\ar[r] & K\ar[r]& L \times Z \ar[r]^{\pi} &\tilde{G} \ar[r]&1,}
\]
where $\pi (e,z)=ez$, for any $e\in L,z\in Z$ and $$K:=\ker \pi=\{ (e,e^{-1}) \mid e \in L \cap Z \}.$$
By \cite[Lemma 2.10(i)]{fk}, for a block $\bar{c}$ of $k(L \times Z)/K$, there is a unique block $\bar{c'}$ of $k(L \times Z)$ such that $\bar{c}$ is dominated by $\bar{c'}$, that is $\pi(\bar{c'}) \neq 0;$ here $\pi(\bar{c'})$ is an idempotent of $Z(k \tilde G)$ and $\pi$ is the $k$-algebra surjection $\pi: k(L \times Z) \rightarrow k \tilde G$ (by abuse of notation) induced by the canonical surjection $$\pi:L \times Z \rightarrow \tilde G,\quad \tilde G \cong (L \times Z)/K.$$ Since $K$ is a $p'$-subgroup and $\pi(\bar{c'}) \neq 0,$ by \cite[Lemma 2.10(ii)]{fk} it follows that $\pi(\bar{c'})=\bar{c}$ and $k \tilde G \bar{c} \cong k(L \times Z)\bar{c'}$ as $k$-algebras. Thus, we obtained a block $\bar{c'}$ of $k(L \times Z)$ with no uniform source algebra. 

Finally,  utilizing the well-known fact that if $\bar{c'}$ is a block of $k(L \times Z),$ then there is a block of $kL$ and a block of $kZ$ (hence with defect zero) such that $k(L \times Z)\bar{c'}$ decomposes into the tensor product of these two block algebras,
it follows that canonical $k$-algebra surjection $\mu:k(L\times Z)\rightarrow kL$ has the property $\mu(\bar{c'})\neq 0.$ Again, using \cite[Lemma 2.10(i)]{fk}, there is a block $\bar{d}$ of $kL$ such that $\mu(\bar{c'})=\bar{d}$ and $k(L \times Z)\bar{c'}\cong kL\bar{d}$ as $k$-algebras; hence, we can identify a source algebra of $\bar{c'}$ with a source algebra of $\bar{d}.$ In conclusion we obtained a quasi-simple finite group $L$ and a block $\bar{d}$ of $kL,$ that has no uniform source algebra.
\end{proof}

\section{Uniform source algebras  of blocks of sporadic groups}\label{sec4}

\begin{proof}\textbf{(of Theorem \ref{thm14-sporadic}).} For each block that  has a trivial defect group we apply \cite[Proposition 6.6.3]{Libo2}.  

(i). For sporadic Mathieu groups the $p$-block structure of $kG$ for the prime di-\\visors $p$ of $|G|$, in particular the number of blocks and their defect, is recently collected in \cite[Table 5]{Mur}. As the author claims, these data can be verified using GAP. Note that if $p\in\{5,7,11,23\}$ all blocks with non-trivial defect groups of all sporadic Mathieu groups have cyclic defect group, hence our conclusion is true by \cite[Proposition 1.6]{BaGe}. When $p\in \{2,3\}$ again all non-principal blocks (with non-trivial defect group) of the Mathieu groups are of cyclic defect group, excepting the case $p=2$ and $G=M_{12},$ which has a non-principal $2$-block of Klein four defect group. We apply \cite[Proposition 1.6]{BaGe} to obtain the conclusion for these cases. For $p=3$ by \cite[Tables 5,7,8]{Mur} we note that the principal $3$-blocks of $M_{11}, M_{22}$ and $ M_{23}$ have defect group $C_3\times C_3$, hence again \cite[Proposition 1.6]{BaGe} assure us that these blocks have uniform source algebras. We are left with the principal blocks of all Mathieu groups for $p=2$ and with the principal blocks of $M_{12}$ and  $M_{24}$ if $p=3.$\\

(ii). Let $G=J_1$. Recall that  $$|J_1|=175560=2^3 \cdot 3 \cdot 5 \cdot 7 \cdot 11 \cdot 19.$$
If $p$ is odd, then all blocks that have a non-trivial defect group must have a cyclic defect group. If $p=2$ using the character table from \cite{Janko} and the details given in the Introduction of \cite{LandMich}, it is easy to see that all $2$-blocks of $J_1$ have defect zero or one, except for the principal $2$-block whose defect group is $C_2 \times C_2 \times C_2$. Thus, all blocks with non-trivial defect groups have abelian defect groups and we apply  \cite[Proposition 1.6]{BaGe}.

Let $G=J_2.$  We know that  $$|J_2|=604800=2^7 \cdot 3^3 \cdot 5^2 \cdot 7.$$ In this case we use the description of the modular representation theory given in \cite{sin}.  If $p\in\{5,7\}$ then all blocks with non-trivial defect group have abelian defect group. The same is true if $p\in\{2,3\}$ for all non-principal blocks with non-trivial defect groups. \\
If $p=3$ the principal block $b_0$  of $J_2$ has defect group the Sylow $3$-group $3_{+}^{1+2},$ (the extraspecial group of order $27$ and exponent $3$) and  the fusion system $\F_0=\F_{3_{+}^{1+2}}(J_2).$ By  \cite[Remark 1.4, Lemma 3.2]{RuVi} it follows that the only $\F_0$-centric, $\F_0$-radical subgroup is $3_{+}^{1+2}.$ We apply now \cite[Proposition 4.5, Part I]{AKO} to obtain that $\F_0=N_{\F_0}(3_{+}^{1+2}),$ hence again using \cite[Proposition 1.6]{BaGe} we are done with this case.

For $G=J_4$ we have $$|J_4|=2^{21}\cdot 3^3\cdot 5\cdot 7\cdot 11^3\cdot 23\cdot 29\cdot 31\cdot 37\cdot 43.$$ The descriptions of the blocks  of this group with their defect groups appears  in \cite[Lemma 5.1]{An} and \cite[Lemma 3.2]{Kos}. If $p\notin\{2,3,11\}$ then any block of $kG$  has cyclic or trivial defect group and we are done. For $p=2$ the principal block $kG$ is the unique block of positive defect (see \cite[Lemma 5.1 (b)]{An}).

For $p=3$ \cite[Lemma 5.1 (a)]{An} assure us that there are $7$ blocks of positive defect, giving the decomposition
$$kG=kG\bar{b}_0 \times kG\bar{b}_1 \times kG\bar{b}_2 \times kG\bar{b}_3\times kG\bar{b}_4\times kG\bar{b}_5\times kG\bar{b}_6,$$
The block $\bar{b}_2$ is the unique block  with defect group an elementary abelian $3$-group of order $9.$ All the other blocks $\bar{b}_i,i\in\{3,4,5,6\}$ have cyclic defect group $C_3.$

For $p=11$ we know by \cite[Lemma 3.2]{Kos} that the principal block is the unique block of positive defect, see also \cite[Table 6, p.464]{Bla}. The defect group of the principal block is the Sylow $11$-subgroup $11_+^{1+2}.$ Since the fusion system of this principal block is  $\F'_0=\F_{11_{+}^{1+2}}(J_4),$ by \cite[Remark 1.4, Lemma 3.2]{RuVi} it follows that the only $\F'_0$-centric, $\F'_0$-radical subgroup is $11_{+}^{1+2}.$ Now, \cite[Proposition 4.5, Part I]{AKO} allow us to obtain that $\F'_0=N_{\F'_0}(11_{+}^{1+2}),$ hence again \cite[Proposition 1.6]{BaGe} concludes our statement.
\end{proof}

\textbf{Acknowledgments.} "This work was supported by a grant of the Ministry of Research, Innovation and Digitization, CNCS -UEFISCDI, project number PN-IV-P1-PCE-2023-0060, within PNCDI IV."
%    Text of article.

%    Bibliographies can be prepared with BibTeX using amsplain,
%    amsalpha, or (for "historical" overviews) natbib style.
\bibliographystyle{amsplain}
%    Insert the bibliography data here.

%%%%%%%%%%%%%%%%%%%%%%%%%%%%%%%%%%%%%%%%%%%%%%%%%%%%%%%%%%%%%%%%%%%%%%%%

\end{document}